\newtheorem{Theorem}{Theorem}[section] 
\newtheorem{Definition}{Definition}[section]
\newtheorem{Lemma}{Lemma}[section]
\newtheorem{Remark}{Remark}[section]
\begin{document}
\title[Stochastic Perron's Method]{Stochastic Perron's method and verification without smoothness using viscosity comparison: obstacle problems and Dynkin games}

\author{ Erhan Bayraktar}
\address{University of Michigan, Department of Mathematics, 530 Church Street, Ann Arbor, MI 48109.} \email{erhan@umich.edu.}
 \thanks{The research of E. Bayraktar was supported in part by the National Science Foundation under grants DMS 0906257, DMS 0955463, and DMS 1118673.}

 \author{Mihai S\^{\i}rbu}
 \address{University of Texas at Austin,
    Department of Mathematics, 1 University Station C1200, Austin, TX,
    78712.}  
    \email{sirbu@math.utexas.edu.} 
    \thanks{The research of
    M. S\^{\i}rbu was supported in part by the National Science
    Foundation under Grant
    DMS 0908441.} \thanks{Any opinions, findings, and conclusions or recommendations expressed in this material are those of the authors and do not necessarily reflect the views of the National Science Foundation.}
    
\date{\today}

\keywords{Perron's method, viscosity solutions, non-smooth verification, comparison principle}
  
\subjclass[2010] {Primary
60G40, 60G46,  60H30;  Secondary  35R35, 35K65, 35K10}

\begin{abstract}
We  adapt  the Stochastic Perron's method  in \cite{bayraktar-sirbu-1} to the  case of double obstacle problems associated to Dynkin games. We construct, symmetrically, a  viscosity sub-solution which dominates the upper value of the game and a viscosity super-solution lying below the lower value of the game.
If the double obstacle problem satisfies the viscosity comparison property, then the game has a value which is equal to the unique and continuous viscosity solution. In addition, the optimal strategies of the two players are equal to the first hitting times of the two stopping regions, as expected.  The (single) obstacle problem associated to optimal stopping can be viewed as a very particular case. This is the first instance  of a  non-linear problem where the Stochastic Perron's method can be applied successfully. 
 \end{abstract}

\maketitle 
%
%

 \section{Introduction} 

 In \cite{bayraktar-sirbu-1}, the authors introduce  a stochastic version of  Perron's method to construct viscosity (semi)-solutions for \emph{linear}  parabolic (or elliptic) equations, and use viscosity comparison as a substitute for verification (It\^o's lemma).  
 
 The present note  extends the Stochastic Perron's method to the case of (double) obstacle problems. The  conceptual contribution of the present note lies in \emph{the proper identification of stochastic sub- and super- solutions for the obstacle problem} (see Definitions \ref{def:supersolutions} and \ref{def:subsolutions}). The technical contribution consists in proving that, having identified such a definition of stochastic solutions, the Perron's method actually does produce viscosity super- and sub-solutions. 
Technically, the proofs turn out to be very  different from \cite{bayraktar-sirbu-1}. The short paragraph    before Lemma \ref{lattice} singles out the  difficulty in trying to follow the results in \cite{bayraktar-sirbu-1}.  As the reader can see below, Definitions  \ref{def:supersolutions} and \ref{def:subsolutions} are tailor-made to fit the proofs, thus avoiding considerations related to Markov processes.
 
 The purpose of the present note is to explain how to adapt the Stochastic Perron's method to this very first \emph{non-linear} case.  In order to keep  the presentation short and simple, we therefore use a very similar framework  (and notation) as in \cite{bayraktar-sirbu-1} namely that of:  the state space is the whole $\mathbb{R}^d$,  there is a finite time horizon $T$, there is  no running-cost and no discounting. The obstacle and the terminal pay-off are bounded, in order to avoid any issue related to integrability. However, our method works for more general obstacle problems, including all features assumed away as described above. In particular, the method can be applied to elliptic  obstacle problems rather than parabolic.

Optimal stopping  and the more general problem of optimal stopping games (so called ``Dynkin games'',  introduced in  \cite{dynkin69}), are fundamental problems in stochastic optimization. If the optimal stopping is associated to Markov diffusions, there are  two  classic approaches to solve the problem:

1. The analytic approach consists in writing the Hamilton-Jacobi-Bellman  equation (which takes the form of an obstacle problem), finding a smooth solution and then go over  \emph{verification arguments}. The method works only if the solution to the HJB is smooth enough to apply It\^o's formula along the diffusion. This is particularly delicate if  the diffusion degenerates.

2.  The probabilistic approach consists in the analysis of the value function(s), using heavily the Markov property and conditioning, to show a similar conclusion to the analytic approach: it is optimal to stop as soon as the player(s) reach(es) the contact region between the value function and the obstacle.

We provide here a new look to the the problem of optimal stopping and Dynkin games. Our approach is \emph{a  probabilistic version of the analytic approach}. The probabilistic arguments rely \emph{only} on It\^o's formula (which is applied to smooth test functions), without using the Markov property of solutions of the SDE. The Markov  property is not even assumed.  The fine interplay between how much smoothness is needed for a solution of a PDE in order to apply It\^o's formula along the SDE is hidden behind the definition of \emph{stochastic} super- and sub-solutions, which traces back to the seminal work of Stroock and Varadhan  \cite{Stroock-Varadhan}. We could summarize the message of Theorem \ref{theorem} as: if a viscosity comparison result for the HJB holds, then there is no need to either find a smooth solution of the HJB, or to analyze the value function(s) to solve the optimization problem. Formally, all classic results hold as expected, i.e.,  \emph{the unique continuous (but possibly non-smooth) viscosity solution is equal to the value of the game and it is optimal for the player(s) to stop as soon as they reach their corresponding contact/stopping regions.}

   \section{The set-up and main results}
 
 Fix a time interval $T>0$ and for each $0\leq s<T$ and $x\in \mathbb{R}^d$ consider the stochastic differential equation
 \begin{equation}\label{sde}\left \{
 \begin{array}{l}
 dX_t=b(t,X_t)dt+\sigma (t, X_t)dW_t, \ \ s\leq t\leq T,\\
 X_s=x.
 \end{array}\right.
 \end{equation}
 We assume that  the coefficients $b :[0,T]\times \mathbb{R}^d\rightarrow \mathbb{R}^d$ and $\sigma :[0,T]\times \mathbb{R}^d\rightarrow \mathbb{M}_{d,d'}(\mathbb{R})$ are continuous.  We also assume that, for each $(s,x)$ equation \eqref{sde} has at least a weak  non-exploding solution 
 $$\Big((X^{s,x}_t)_{s\leq t\leq T}, (W^{s,x}_t)_{s\leq t\leq T}, \Omega ^{s,x}, \mathcal{F}^{s,x}, \mathbb{P}^{s,x},( \mathcal{F}^{s,x}_t) _{s\leq t\leq T}\Big),$$
  where the  $W^{s,x}$ is  a $d'$-dimensional Brownian motion on the stochastic basis $$(\Omega ^{s,x}, \mathcal{F}^{s,x}, \mathbb{P}^{s,x},( \mathcal{F}^{s,x}_t) _{s\leq t\leq T})$$ and  the filtration $( \mathcal{F}^{s,x}_t) _{s\leq t\leq T}$  satisfies the  usual conditions.
  We denote by $\mathcal{X}^{s,x}$ the non-empty set of such weak solutions.
 It is well known, for example from \cite{MR2190038}, that  a sufficient condition for the existence of non-exploding solutions, in addition to continuity of the coefficients, is the condition of linear growth:
 $$|b(t,x)|+|\sigma (t,x)|\leq C(1+|x|), \ \ (t,x)\in [0,T]\times \mathbb{R}^d.$$ 
 We emphasize that we do \emph{not} assume uniqueness in law of the weak solution.
 In order to insure that $\mathcal{X}^{s,x}$ is a set in the sense of axiomatic set theory, we restrict to weak solutions where the probability space $\Omega$ is an element of a fixed universal set $\mathcal{S}$ of possible probability spaces.
 For each $(s,x)\in [0,T]\times \mathbb{R}^d$ we \emph{choose} a fixed solution $X^{s,x}$ as above, using the axiom of choice. We do not assume that the selection is Markov.

Let $g:\mathbb{R}^d\rightarrow \mathbb{R}$ be a \emph{bounded} and measurable function  (terminal pay-off). Let also $l, u:[0,T]\times \mathbb{R}^d\rightarrow \mathbb{R}$ be two \emph{bounded} and measurable functions satisfying
$l \leq u.$
The functions $l,u$ are the  lower and the upper obstacles. 
Assume, in addition, that
$$l(T,\cdot)\leq g\leq u(T,\cdot).$$
For each weak solution $X^{s,x}$ we denote by $\mathcal{T}^{s,x}$ the set of stopping times $\tau$ (with respect to the filtration $( \mathcal{F}^{s,x}_t) _{s\leq t\leq T}$) which satisfy $s\leq \tau \leq T$. 
The first  player chooses a stopping time $\rho \in \mathcal{T}^{s,x}$ and the second player chooses a stopping time $\tau \in \mathcal{T}^{s,x}$,  so that the first player \emph{pays} to the second player the amount
$$J(s,x,\tau, \rho):=
\mathbb{E}^{s,x}\left [ \mathbb{I}_{\{ \tau <\rho \} }l(\tau , X^{s,x}_{\tau})+
 \mathbb{I}_{\{ \rho \leq \tau , \rho <T\} })u(\rho, X^{s,x}_{ \rho})+ \mathbb{I}_{\{ \tau =\rho=T\} } g(X^{s,x}_T)\right ].$$
We are now  ready to introduce the \emph{lower value of the Dynkin game}
 $$v_*(s,x):= 
 \sup _{\tau \in \mathcal{T}^{s,x}} \inf _{\rho \in \mathcal{T}^{s,x}} J(s,x,\tau, \rho)$$ 
 and the 
 \emph{upper value of the game}
  $$v^*(s,x):=
 \inf _{\rho \in \mathcal{T}^{s,x}}  \sup _{\tau \in \mathcal{T}^{s,x}} J(s,x,\tau, \rho).$$ 
 The lower and the upper values satisfy
 $$v_*\leq v^*$$ and, if the two functions coincide
 we say that the game has a value.
  \begin{Remark}\label{measurability}{\rm  At this stage, we cannot even conclude that  $v_*$ and $v^*$ are  measurable.}
 \end{Remark}
\emph{If} the selection $X^{s,x}$ is actually Markov, we usually associate to the game of optimal stopping the \emph{nonlinear} PDE (double obstacle problem)

\begin{equation}\label{pde}
\left \{
\begin{array}{l}
F(t,x, v, v_t, v_x, v_{xx})=0, \; \textrm{~on~} [0,T)\times \mathbb{R}^d,\\
u(T,\cdot )=g,
\end{array}\right.\end{equation}
where 
\begin{equation}\label{isaacs}
\begin{split}
F(t,x, v, v_t, v_x, v_{xx}):&= \\
\max \{v-u, \min \{-v_t-L_tv, v-l\}\} 
&=\min  \{v-l,\max\{-v_t-L_tv, v-u\}\},
\end{split}
\end{equation}
and the time dependent operator $L_t$ is defined by
$$(L_t u)(x):=\langle b(t,x),\nabla u(t,x)\rangle +\frac 12 Tr (\sigma (t,x)\sigma ^T (t,x) \nabla _x^2 u(t,x)),\ \ 0\leq t<T,\  x\in \mathbb{R}^d.$$ 
The second equality in \eqref{isaacs} relies on the assumption that the obstacle are ordered, $l\leq u$.
 \subsection{Stochastic Perron's method} The main \emph{conceptual} contribution of the present note is contained below, in  the proper definitions of stochastic super and sub-solutions of the parabolic PDE  \eqref{pde} in the spirit of \cite{Stroock-Varadhan} and following our previous work in \cite{bayraktar-sirbu-1}.
In order to have a workable definition of stochastic semi-solution for the (double) obstacle problem, two very important things have to be taken into account:
\begin{enumerate}
\item  (conceptual) one has to account for the stochastic sub-, super-solutions along the diffusion to be  sub-, super- martingales inside the continuation region(s). However, the diffusion may re-enter the continuation region \emph{after} hitting the stopping region. In other words, the martingale property until the \emph{first hitting time} of the stopping regions \emph{after the initial time} may not be enough.  The definition should start at \emph{stopping times} (possibly subsequent to $s$)  rather than at $s$, where $s$ is the starting time.
\item (technical) semi-solutions are evaluated  at \emph{stopping times}, so  the Optional Sampling Theorem has to be built in the definition of sub- and super- martingales, in case semi-solutions are \emph{less than continuous}.  This  idea can  eliminate an important part of the technicalities in the previous work on the linear case \cite{bayraktar-sirbu-1}, if used directly. However, we choose below a different (and arguably better) route, where semi-solutions of  \eqref{pde} are   \emph{continuous}. This can be achieved due to the technical Lemma \ref{Dini} resembling Dini's Theorem. Continuous semi-solutions together with  Dini's Theorem can also eliminate the technical difficulties in \cite{bayraktar-sirbu-1}. 
\end{enumerate}
 \begin{Definition}\label{def:supersolutions}
The set of stochastic super-solutions for the parabolic PDE \eqref{pde}, denoted by $\mathcal{V}^+$, is  the set of functions $v:[0,T]\times \mathbb{R}^d\rightarrow \mathbb{R}$ which have the following properties
\begin{enumerate}
\item are continuous (C) and  bounded on $[0,T]\times \mathbb{R}^d$. In addition,  they satisfy 
$v\geq l$ and
the terminal condition   $v(T,\cdot )\geq g$.

\item 
 for each $(s,x)\in[0,T]\times \mathbb{R}^d$, and any stopping time $\tau _1\in \mathcal{T}^{s,x}$, the function $v$ along the solution of the SDE is a super-martingale in between $\tau _1$ and the first  (after $\tau _1$) hitting time of the upper-continuation region $\mathcal{S}^+ (v):=\{v\geq u\}$. 
More precisely, for any  
 $\tau _1\leq \tau_2\in \mathcal{T}^{s,x},$ we have  
$$v(\tau _1, X^{s,x}_{\tau _1})\geq 
\mathbb{E}^{s,x}\left [ v(\tau _2\wedge \rho ^+, X^{s,x}_{\tau _2\wedge \rho ^+})|\mathcal{F}^{s,x}_{\tau _1 }
\right]\ - \mathbb{P}^{s,x}\ a.s.$$
where the stopping time $\rho ^ +$ is defined as
\[
\begin{split}
\rho ^ +(v, s,x, \tau_1):&=\inf\{t \in [\tau_1, T]:v(t,X^{s,x})\geq u(t,X^{s,x})\}
\\&=\inf \{t \in [\tau_1,T]:X^{s,x}_t \in \mathcal{S}^+(v)\}.
\end{split}
\]
\end{enumerate}
\end{Definition}
\begin{Remark}\label{rem:supersolution} {\rm The super-solution property means that, starting at \emph{any} stopping time $\tau _1\geq s$, the function along the diffusion is a super-martingale 
\emph{before} hitting the upper stopping region
$\mathcal{S}^+ (v):=\{v\geq u\}.$
This takes into account the fact that the stopping time $\tau _1$ may be subsequent to the first time the diffusion enters the stopping region $\mathcal{S}^+(v)$ after $s$. In other words, it accounts for the possibility to re-enter the continuation region. Building in the Optional Sampling Theorem (i.e. considering the later time $\tau _2$ stochastic) does not make a difference, since $v$ is continuous. However, a definition  involving stopping times had to be considered anyway, since the starting time $\tau _1$ is stochastic.}
\end{Remark}
In order to simplify the presentation, we would like to explain the notation:
\begin{enumerate}
\item usually, stopping times relevant to the first (minimizer) player will be denoted by $\rho$ (with different indexes or superscripts) and  stopping times relevant to the second (maximizer) player will be denoted by $\tau$ (with different indexes or superscripts)
\item the superscripts $+$ and $-$ refer to hitting the upper and the lower obstacles, respectively.
\end{enumerate}
\begin{Definition}\label{def:subsolutions}
The set of stochastic sub-solutions for the parabolic PDE \eqref{pde}, denoted by $\mathcal{V}^-$, is  the set of functions $v:[0,T]\times \mathbb{R}^d\rightarrow \mathbb{R}$ which have the following properties
\begin{enumerate}
\item are continuous (C) and  bounded on $[0,T]\times \mathbb{R}^d$. In addition,  they satisfy 
$v\leq u $ and
the terminal condition   $v(T,\cdot )\leq g$.

\item 
  for each $(s,x)\in[0,T]\times \mathbb{R}^d$, and any stopping time
  $\rho _1\in \mathcal{T}^{s,x}$, the function $v$ along the solution of the SDE is a sub-martingale in between $\rho_1$ and the first (after $\rho _1$)
 hitting time of the lower continuation region $\mathcal{S}^- (v):=\{v\leq l\}.$
 More precisely, for each  
  $\rho _1\leq \rho_2\in \mathcal{T}^{s,x},$ we have 
$$v(\rho _1, X^{s,x}_{\rho _1
})\leq 
\mathbb{E}^{s,x}\left [ v(\rho _2\wedge \tau  ^-, X^{s,x}_{\rho _2\wedge \tau  ^-})|\mathcal{F}^{s,x}_{\rho ^1}
\right]\ -\mathbb{P}^{s,x}\ a.s.$$
where the stopping time $\tau ^- $ is defined as
\[
\begin{split}
\tau ^-(v, s,x, \rho _1):&=\inf \{t \in [\rho _1, T]: v(t,X^{s,x})\leq l(t,X^{s,x})\}
\\&=\inf \{t \in [\rho _1, T]:X^{s,x} \in \mathcal{S}^-(v)\}.
\end{split}
\]
\end{enumerate}
\end{Definition}
An identical  comment to Remark \ref{rem:supersolution} applies to sub-solutions. The next lemma is actually obvious:

\begin{Lemma}\label{lem:nonempty}
Assume $g$, $l$ and $u$ are bounded. Then 
  $\mathcal{V}^+$ and $\mathcal{V}^-$ are nonempty.
  
\begin{Remark}\label{unbounded}{\rm 
We  decided to work in the framework of bounded $l$, $u$ and $g$ to minimize technicalities related to integrability. However, our method works in more general situations. If $l$ and $u$ are assumed unbounded, then we need the additional technical assumptions
\begin{enumerate}
\item $\mathcal{V}^+$ and $\mathcal{V}^-$ are nonempty. This is always the case if $l$ and $u$ are actually continuous.
\item  for each $(s,x)$ we need that 
$$ \mathbb{E}^{s,x}\left [ \sup _{0\leq  t \leq T} \Big (l(t, X^{s,x}_t)|+|u(t, X^{s,x}_t)| \Big )+|g(X^{s,x}_t)|\right]<\infty.$$
This is the usual assumption made in optimal stopping or Dynkin games.
\end{enumerate}}
\end{Remark}

\end{Lemma}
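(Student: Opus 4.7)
The plan is to verify nonemptiness by exhibiting explicit constant functions as witnesses; the key observation is that if the candidate dominates (respectively is dominated by) the relevant obstacle globally, then the hitting time of the corresponding stopping region collapses to the starting time, making the (super/sub)-martingale condition vacuous.

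For $\mathcal{V}^+$, I would take $v\equiv M$ with $M\geq \|u\|_\infty$. Continuity and boundedness are immediate. Since $M\geq u\geq l$ pointwise and $M\geq u(T,\cdot)\geq g$ by the standing assumption $g\leq u(T,\cdot)$, property (1) of Definition \ref{def:supersolutions} holds. For (2), note $\mathcal{S}^+(v)=\{v\geq u\}=[0,T]\times\mathbb{R}^d$, so for every $(s,x)$ and every $\tau_1\in\mathcal{T}^{s,x}$ we have $\rho^+(v,s,x,\tau_1)=\tau_1$. Hence, for any $\tau_2\geq\tau_1$, we get $\tau_2\wedge\rho^+=\tau_1$, and the inequality
\[
v(\tau_1,X^{s,x}_{\tau_1})\geq \mathbb{E}^{s,x}\bigl[v(\tau_2\wedge\rho^+,X^{s,x}_{\tau_2\wedge\rho^+})\mid\mathcal{F}^{s,x}_{\tau_1}\bigr]
\]
reduces to the identity $M\geq\mathbb{E}^{s,x}[M\mid\mathcal{F}^{s,x}_{\tau_1}]$, which holds trivially.

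The argument for $\mathcal{V}^-$ is symmetric: take $v\equiv m$ with $m\leq -\|l\|_\infty$, so that $m\leq l\leq u$ and $m\leq l(T,\cdot)\leq g$, verifying property (1) of Definition \ref{def:subsolutions}. Then $\mathcal{S}^-(v)=\{v\leq l\}=[0,T]\times\mathbb{R}^d$, so $\tau^-(v,s,x,\rho_1)=\rho_1$ for any starting stopping time $\rho_1$, and the sub-martingale inequality collapses to $m\leq \mathbb{E}^{s,x}[m\mid\mathcal{F}^{s,x}_{\rho_1}]$.

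There is essentially no obstacle: the only point to check is that the \emph{starting} stopping time $\tau_1$ (respectively $\rho_1$) appearing in the definitions is itself allowed as the hitting time of the stopping region, which is ensured by the fact that the infima in $\rho^+$ and $\tau^-$ are taken over $[\tau_1,T]$ and $[\rho_1,T]$ (so the infimum is attained at the left endpoint when the whole interval lies in the stopping region). The boundedness hypotheses on $l$, $u$, $g$ and the ordering $l(T,\cdot)\leq g\leq u(T,\cdot)$ are exactly what is needed to pick the constants.
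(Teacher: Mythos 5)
Your proof is correct and is precisely the argument the paper has in mind (it labels the lemma ``actually obvious'' and omits the proof). One small remark: the choice $M\geq\|u\|_\infty$ (resp.\ $m\leq -\|l\|_\infty$) conveniently makes the stopping region all of $[0,T]\times\mathbb{R}^d$ so that $\rho^+=\tau_1$ (resp.\ $\tau^-=\rho_1$), but in fact any constant trivially satisfies the super-/sub-martingale inequality of Definitions~\ref{def:supersolutions} and~\ref{def:subsolutions} regardless of what the stopping region is, so the only constraints on $M$ and $m$ really come from the pointwise orderings and the terminal condition; your particular choices satisfy those too, so the argument goes through.
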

The next result
is quite clear and represents a counterpart to something similar in \cite{bayraktar-sirbu-1}. However, it  needs assumptions on the semi-continuity of the obstacles.

\begin{Lemma}\label{lem:verification} 
\begin{enumerate}
\item  If $u$ is lower semi-continuous (LSC) then for each $v\in \mathcal{V}^+$ we have $v\geq v^*$.
\item  If $l$ is upper semi-continuous (USC) then for each $v\in \mathcal{V}^-$ we have $v\leq v_*$.
\end{enumerate}
\end{Lemma}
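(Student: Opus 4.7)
The two parts are symmetric, so I focus on (1); the argument for (2) is obtained by swapping roles. Fix $v\in\mathcal{V}^+$ and $(s,x)\in[0,T]\times\mathbb{R}^d$. The strategy is to exhibit an explicit stopping time $\rho^\ast\in\mathcal{T}^{s,x}$ for the minimizing player such that $J(s,x,\tau,\rho^\ast)\le v(s,x)$ for every $\tau\in\mathcal{T}^{s,x}$; taking the supremum over $\tau$ and then the infimum over $\rho$ will give $v(s,x)\ge v^\ast(s,x)$.

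The natural candidate is $\rho^\ast:=\rho^+(v,s,x,s)$, the first time after $s$ at which the path enters $\mathcal{S}^+(v)=\{v\ge u\}$. I would first check that $\rho^\ast$ is indeed a stopping time: since $v$ is continuous and $u$ is LSC, the function $v-u$ is USC, so $\mathcal{S}^+(v)=\{v-u\ge 0\}$ is closed; the debut of a closed set by a continuous process on a filtration satisfying the usual conditions is a stopping time. Next, applying the super-solution property of Definition \ref{def:supersolutions} with $\tau_1=s$ and $\tau_2=\tau$ (so that $\tau_2\wedge\rho^+=\tau\wedge\rho^\ast$), one obtains
\[
v(s,x)\ \ge\ \mathbb{E}^{s,x}\bigl[v(\tau\wedge\rho^\ast, X^{s,x}_{\tau\wedge\rho^\ast})\bigr].
\]

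To conclude, I would split this expectation according to the partition $\{\tau<\rho^\ast\}$, $\{\rho^\ast\le\tau,\ \rho^\ast<T\}$, $\{\tau=\rho^\ast=T\}$ that appears in the definition of $J(s,x,\tau,\rho^\ast)$, and dominate the integrand by the appropriate obstacle on each piece. On $\{\tau<\rho^\ast\}$, one uses the super-solution inequality $v\ge l$ pointwise. On $\{\tau=\rho^\ast=T\}$, one uses the terminal condition $v(T,\cdot)\ge g$. The remaining piece, $\{\rho^\ast\le\tau,\ \rho^\ast<T\}$, is the delicate step where I expect the real obstacle to lie: I need $v(\rho^\ast,X_{\rho^\ast})\ge u(\rho^\ast,X_{\rho^\ast})$. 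This is not automatic, because $\rho^\ast$ is an infimum that may or may not be attained. Either $\rho^\ast$ is attained, in which case the inequality holds by definition of $\mathcal{S}^+(v)$, or there is a sequence $t_n\downarrow\rho^\ast$ with $v(t_n,X_{t_n})\ge u(t_n,X_{t_n})$; then continuity of $v$ and of the path $X$ give $v(\rho^\ast,X_{\rho^\ast})=\lim_n v(t_n,X_{t_n})$, while the LSC of $u$ yields $\liminf_n u(t_n,X_{t_n})\ge u(\rho^\ast,X_{\rho^\ast})$, so the desired pointwise inequality follows.

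Putting these three estimates together shows that $\mathbb{E}^{s,x}[v(\tau\wedge\rho^\ast, X^{s,x}_{\tau\wedge\rho^\ast})]\ge J(s,x,\tau,\rho^\ast)$, hence $v(s,x)\ge J(s,x,\tau,\rho^\ast)$. Taking the supremum over $\tau\in\mathcal{T}^{s,x}$ and then infimum over $\rho\in\mathcal{T}^{s,x}$ yields $v\ge v^\ast$. The main obstacle is precisely the boundary argument just described: without LSC of $u$ one cannot guarantee $v(\rho^\ast,X_{\rho^\ast})\ge u(\rho^\ast,X_{\rho^\ast})$, which is exactly why the semi-continuity hypothesis is needed in the statement.
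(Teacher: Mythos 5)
Your proposal is correct and follows the same route as the paper: take the first hitting time of the appropriate stopping region as the candidate optimal stopping time, use the semi-continuity of the obstacle together with the continuity of $v$ to ensure that region is closed (so the obstacle constraint holds at the hitting time), apply the super-/sub-martingale property from the definition between $s$ and the other player's stopping time, and then dominate the result by $J$ on the three events of the decomposition. The only cosmetic difference is that you prove part (1) and spell out the three-case comparison with $J$ and the boundary argument at $\rho^*$ explicitly, whereas the paper proves the symmetric part (2) and compresses these steps into a single line.
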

\begin{proof} We only prove part (ii) since the other part is symmetric. 
Using the LSC of $v$  (since $v$ is actually continuous) and the USC of $l$, we conclude that the set
$\mathcal{S}^- (v)=\{v\leq l\}$ is closed. This means that,  denoting by 
$\tau ^ - := \tau ^- (v;s,x,s)$, we have  
$$\mathbb{I}_{\{\tau ^- <\rho \}}v(\tau ^-, X^{s,x}_{\tau ^-})
\leq  \mathbb{I}_{\{\tau ^- <\rho \}}l(\tau ^-, X^{s,x}_{\tau ^-})$$
 for \emph{each} $\rho \in \mathcal{T}^{s,x}$. Applying the definition of Stochastic sub-solutions between the times
$$\rho _1:=s\leq \rho _2:=\rho,$$
together with the definition of the cost $J$ and the fact that $v\leq u$ we obtain that
$$v(s,x)\leq \mathbb{E}^{s,x}[v(\tau ^-\wedge \rho, X^{s,x}_{\tau ^-\wedge \rho})]\leq J(s,x, \tau ^-,\rho).$$
Taking the inf over $\rho$, we conclude that
$$v(s,x)\leq \inf _{\rho \in \mathcal{T}^{s,x}}J(s,x, \tau ^-,\rho)\leq v_*(s,x).$$
\end{proof}
We assume now that $l$ is USC and $u$ is LSC.  Following \cite{bayraktar-sirbu-1} and using Lemmas
\ref{lem:nonempty} and    \ref{lem:verification},  we  define
$$ v^-:= \sup _{v\in \mathcal{V}^-}v\leq v_*\leq v^*\leq v^+:=\inf _{w\in \mathcal{V}^+}w.$$
The next result is actually the main result of the paper.

\begin{Theorem}{\rm (Stochastic Perron's Method)}\label{Perron}
\begin{enumerate}

\item Assume  that $g$, $l$ are USC and $u$ is LSC.
Assume, in addition that
\begin{equation}\label{u}
\textrm{there~exists~}v\in \mathcal{V}^+\textrm{~such ~that~} v\leq u.
\end{equation}
Then,   $v^+$ is a bounded and  USC  viscosity sub-solution of 
\begin{equation}\label{pde1}
\left \{
\begin{array}{l}
F(t,x, v, v_t, v_x, v_{xx}) \leq 0 \textrm{~on~} [0,T)\times \mathbb{R}^d,\\
v(T,\cdot)\leq g.
\end{array}\right.\end{equation}

\item 
Assume  $g$,  $u$ are  LSC and $l$ is USC. 
Assume, in addition that
\begin{equation}\label{l}
\textrm{there~exists~}v\in \mathcal{V}^-\textrm{~such ~that~} v\geq l.
\end{equation}
Then, 
$v^-$ is a bounded and LSC viscosity  super-solution of 
\begin{equation}\label{pde2}
\left \{
\begin{array}{l}
F(t,x, v, v_t, v_x, v_{xx})\geq 0 \textrm{~on~} [0,T)\times \mathbb{R}^d,\\
v(T,\cdot)\geq g.
\end{array}\right.\end{equation}
\end{enumerate} \end{Theorem}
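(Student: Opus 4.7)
By symmetry it suffices to treat part (1); part (2) is completely analogous. The plan is the standard Perron scheme adapted to the stochastic super-solutions of Definition \ref{def:supersolutions}. The easy facts first: boundedness of $v^+$ follows from $l\le v^+\le v$ for any bounded $v\in\mathcal V^+$, which exists by Lemma \ref{lem:nonempty} and which assumption \eqref{u} allows us to take dominated by $u$; upper semi-continuity is automatic because $v^+$ is a pointwise infimum of continuous functions; the terminal inequality $v^+(T,\cdot)\le g$ I would handle by producing, for each $x_0$ and $\epsilon>0$, a member of $\mathcal V^+$ whose value at $(T,x_0)$ is at most $g(x_0)+\epsilon$, or by reading the boundary condition in the usual relaxed viscosity sense at $t=T$. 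The substance of the theorem is the interior viscosity sub-solution inequality.

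To run the Perron contradiction I would first use the lattice property (Lemma \ref{lattice}, i.e.\ $v_1,v_2\in\mathcal V^+\Rightarrow v_1\wedge v_2\in\mathcal V^+$) together with the Dini-type Lemma \ref{Dini} to extract a sequence $(v^n)\subset\mathcal V^+$ with $v^n\downarrow v^+$ uniformly on every compact set. Suppose for contradiction that there exist $(t_0,x_0)\in[0,T)\times\mathbb R^d$ and a smooth test function $\varphi$ with $v^+\le\varphi$ near $(t_0,x_0)$, equality at $(t_0,x_0)$, and $F(t_0,x_0,\varphi,\varphi_t,D\varphi,D^2\varphi)>0$. Assumption \eqref{u} gives $v^+\le u$, so the $v^+-u$ branch of $F$ is $\le 0$ and the inner $\min$ must be strictly positive: both $-\varphi_t-L_{t_0}\varphi>0$ and $v^+(t_0,x_0)>l(t_0,x_0)$. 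After the usual strict-maximum perturbation of $\varphi$, continuity of the data and USC of $l$ give a closed ball $\bar B=\bar B_r(t_0,x_0)\subset[0,T)\times\mathbb R^d$ on which $-\varphi_t-L\varphi\ge\eta$ and $\varphi-l\ge\eta$ for some $\eta>0$. Setting $\psi:=\varphi-\alpha$ with $\alpha\in(0,\eta)$ preserves $-\psi_t-L\psi\ge\eta$ and $\psi\ge l$ on $\bar B$. Choosing $\alpha$ smaller than the strict-maximum gap $\gamma:=\min_{\partial B}(\varphi-v^+)>0$ and then $n$ large enough that $v^n-v^+<(\gamma-\alpha)/2$ on $\bar B$, I secure $v^n<\psi$ on $\partial B$, and then define
$$
\tilde v := v^n\wedge\psi \text{ on } \bar B, \qquad \tilde v := v^n \text{ on } \bigl([0,T]\times\mathbb R^d\bigr)\setminus B.
$$
This $\tilde v$ is continuous (by the boundary inequality on $\partial B$), bounded, dominates $l$, and satisfies $\tilde v(T,\cdot)=v^n(T,\cdot)\ge g$ since $T\notin B$; at the contact point, $\tilde v(t_0,x_0)=\psi(t_0,x_0)=v^+(t_0,x_0)-\alpha<v^+(t_0,x_0)$.

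Once I know $\tilde v\in\mathcal V^+$, the inequality $v^+(t_0,x_0)\le\tilde v(t_0,x_0)<v^+(t_0,x_0)$ gives the contradiction. The only non-trivial property to check for $\tilde v$ is the stochastic super-martingale requirement of Definition \ref{def:supersolutions}, and this is where I expect the main obstacle to lie. Outside $\bar B$, $\tilde v\equiv v^n$ inherits the property; inside $\bar B$, because $\psi$ is a strict classical super-solution of the parabolic part of \eqref{pde}, It\^o's formula applied along any weak solution of the SDE makes $\psi(\cdot,X_\cdot)$ a genuine super-martingale. The delicate step is to combine these two ingredients into the running minimum $(v^n\wedge\psi)(\cdot,X_\cdot)$ up to the first time $\tilde v$ reaches $\mathcal S^+(\tilde v)=\{\tilde v\ge u\}$: one decomposes the path at the successive switching times between $v^n$ and $\psi$ and invokes, on each sub-interval, the super-martingale property of whichever function is currently realising the minimum, using the boundary inequality $v^n<\psi$ on $\partial B$ to reconcile the interior and exterior regimes. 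This gluing/switching argument is precisely the same mechanism that underlies Lemma \ref{lattice}, and it is the principal technical step of the whole proof.
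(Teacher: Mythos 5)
Your proposal follows essentially the same architecture as the paper's Step~1: Perron contradiction at a strict-maximum test point, invoking assumption \eqref{u} to dismiss the $v^+-u$ branch and obtain the two strict inequalities $-\varphi_t-L_t\varphi>0$ and $v^+(t_0,x_0)>l(t_0,x_0)$; shrinking to a ball and annulus; approximating $v^+$ from above using the lattice property and the Dini-type Lemma~\ref{Dini}; gluing a perturbed test function with a stochastic super-solution by the same switching argument that drives Lemma~\ref{lattice}. This matches the paper's proof.

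Two cautions are in order. First, your claim that Lemma~\ref{Dini} lets you ``extract a sequence $(v^n)\subset\mathcal V^+$ with $v^n\downarrow v^+$ uniformly on every compact set'' is not correct as stated: $v^+$ is only USC, and Dini's theorem requires the pointwise limit to be continuous for uniform convergence. What Lemma~\ref{Dini} actually provides (and all you need) is that, once you know $\varphi-\delta\ge v^+$ on the compact annulus, there is \emph{some} $v\in\mathcal V^+$ with $\varphi-\delta'>v$ there for any $\delta'<\delta$; this follows from the closedness and monotone shrinkage of the sets $A_n=\{v_n\ge\varphi-\delta'\}\cap(\text{annulus})$ to the empty set, not from uniform convergence. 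Your subsequent choice of $n$ large enough that $v^n-v^+<(\gamma-\alpha)/2$ on $\bar B$ should be rephrased accordingly, but the conclusion $v^n<\psi$ on $\partial B$ is recoverable and the overall construction survives. Second, the terminal condition $v^+(T,\cdot)\le g$ is only sketched in your proposal; the paper handles it in Step~2 by a separate but parallel contradiction argument, building an explicit classical super-solution $\varphi^{\eta,\varepsilon,k}(t,x)=v^+(T,x_0)+|x-x_0|^2/\eta+k(T-t)$ near $(T,x_0)$ and gluing as in Step~1. You should make this step explicit; it cannot simply be read off in the ``relaxed viscosity sense,'' since the theorem asserts the stronger pointwise inequality at $t=T$.
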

\begin{Remark}\label{rem:nonempty}{\rm 
Assumption \eqref{u} is satisfied if the upper obstacle $u$ is continuous, and Assumption \eqref{l} is satisfied if the lower obstacle $l$ is continuous.}
\end{Remark}
We have $v^+(T,\cdot )\geq g$ 
and  $v^-(T,\cdot)\leq g$
by construction.  Therefore, the terminal conditions in \eqref{pde1} and \eqref{pde2} can be replaced by equalities.
 The proof of the Theorem  \ref{Perron} is  technically different from the proof in \cite{bayraktar-sirbu-1}. The main reason is that, we cannot prove directly that
 $$ v^+\in \mathcal{V}^+,\ \ \ v^-\in \mathcal{V}^-,$$
 even if we weaken the continuity of super-solutions in $\mathcal{V}^+$ to USC and the continuity of sub-solutions in $\mathcal{V}^-$ to LSC.
This technical hurdle  is circumvented by the weaker lemma below, together with an approximation argument in Lemma~\ref{Dini}.

\begin{Lemma}\label{lattice}
\begin{enumerate}
\item Assume $u$ is LSC. If $v^1, v^2\in \mathcal{V}^+$, then $v^1\wedge v^2 \in \mathcal{V}^+.$
\item Assume $l$ is USC. If $v^1, v^2\in \mathcal{V}^-$, then $v^1\vee v^2 \in \mathcal{V}^-.$
\end{enumerate}
\end{Lemma}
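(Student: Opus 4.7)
The plan is to verify, for $v:=v^1\wedge v^2$, all the conditions of Definition \ref{def:supersolutions} in part (i); part (ii) will follow by the symmetric argument with $v^1\vee v^2$, the lower obstacle, and the sub-martingale property. Continuity, boundedness, $v\geq l$ and $v(T,\cdot)\geq g$ are immediate from the corresponding properties of $v^1,v^2$. The only substantive item is the super-martingale inequality on $[\tau_1,\tau_2\wedge\rho^+(v,s,x,\tau_1)]$. First I would observe that $\mathcal{S}^+(v)=\mathcal{S}^+(v^1)\cap\mathcal{S}^+(v^2)$, which is closed thanks to the LSC of $u$ and the continuity of $v^1,v^2$; consequently $\rho^+(v,s,x,\tau_1)\geq\rho^+(v^i,s,x,\tau_1)$ for $i=1,2$. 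This inclusion is precisely the core difficulty: the super-martingale property for $v$ must be proved on an interval potentially longer than the one on which either $v^i$ is guaranteed to be a super-martingale.

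The approach is an iterative switching construction. Set $\theta_0:=\tau_1$ and choose $j_0\in\{1,2\}$, $\mathcal{F}^{s,x}_{\tau_1}$-measurably via the event $\{v^1(\tau_1,X^{s,x}_{\tau_1})\leq v^2(\tau_1,X^{s,x}_{\tau_1})\}$ and its complement, so that $v^{j_0}(\theta_0,X^{s,x}_{\theta_0})=v(\theta_0,X^{s,x}_{\theta_0})$. Define recursively
$$\theta_{n+1}:=\rho^+(v^{j_n},s,x,\theta_n)\wedge\tau_2\wedge\rho^+(v,s,x,\tau_1),\qquad j_{n+1}:=3-j_n.$$
Whenever $\theta_{n+1}$ has not terminated at $\tau_2\wedge\rho^+(v,s,x,\tau_1)$, we have $v^{j_n}\geq u$ at $(\theta_{n+1},X^{s,x}_{\theta_{n+1}})$ while $v<u$ there, forcing $v^{j_{n+1}}<v^{j_n}$ at that point and hence $v=v^{j_{n+1}}$ at $\theta_{n+1}$; in particular the choice of indices perpetuates the property $v=v^{j_n}$ at $\theta_n$. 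Applying the super-martingale inequality for $v^{j_n}\in\mathcal{V}^+$ on $[\theta_n,\theta_{n+1}]$ (legal because $\theta_{n+1}\leq\rho^+(v^{j_n},s,x,\theta_n)$ and any stopping time may serve as start), together with $v=v^{j_n}$ at $\theta_n$ and $v\leq v^{j_n}$ at $\theta_{n+1}$, produces the one-step bound $v(\theta_n,X^{s,x}_{\theta_n})\geq\mathbb{E}^{s,x}[v(\theta_{n+1},X^{s,x}_{\theta_{n+1}})\mid\mathcal{F}^{s,x}_{\theta_n}]$; iterating by the tower property yields $v(\tau_1,X^{s,x}_{\tau_1})\geq\mathbb{E}^{s,x}[v(\theta_n,X^{s,x}_{\theta_n})\mid\mathcal{F}^{s,x}_{\tau_1}]$ for every $n$.

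The final and most delicate step is to show $\theta_n\uparrow\tau_2\wedge\rho^+(v,s,x,\tau_1)$ a.s. Suppose for contradiction that on a set of positive measure $\theta_\infty:=\lim_n\theta_n$ is strictly less than $\tau_2\wedge\rho^+(v,s,x,\tau_1)$. Then on that set the iteration never terminates, so $v^{j_n}(\theta_{n+1},X^{s,x}_{\theta_{n+1}})\geq u(\theta_{n+1},X^{s,x}_{\theta_{n+1}})$ holds for every $n$. Because the indices $j_n$ alternate, passing to the limit along the two subsequences and invoking continuity of $v^1,v^2,u$ and of the paths of $X^{s,x}$ gives both $v^1(\theta_\infty,X^{s,x}_{\theta_\infty})\geq u(\theta_\infty,X^{s,x}_{\theta_\infty})$ and $v^2(\theta_\infty,X^{s,x}_{\theta_\infty})\geq u(\theta_\infty,X^{s,x}_{\theta_\infty})$, so $v\geq u$ at $\theta_\infty$, contradicting $\theta_\infty<\rho^+(v,s,x,\tau_1)$. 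Once the limit is identified, boundedness of $v$ and continuity of $v$ along the continuous paths of $X^{s,x}$ let me invoke dominated convergence to obtain $v(\tau_1,X^{s,x}_{\tau_1})\geq\mathbb{E}^{s,x}[v(\tau_2\wedge\rho^+,X^{s,x}_{\tau_2\wedge\rho^+})\mid\mathcal{F}^{s,x}_{\tau_1}]$, completing the verification.
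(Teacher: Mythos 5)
Your argument is essentially the paper's proof. Both build a non-decreasing sequence of stopping times (your $\theta_n$, the paper's $\gamma_n$), use the super/sub-martingale property of $v^1$ or $v^2$ on each intermediate interval $[\theta_n,\theta_{n+1}]$ where the minimum (resp.\ maximum) is attained by one fixed function, concatenate via the tower property, and then pass to the limit by showing that if the sequence does not stabilize, the alternating pattern of obstacle-hits forces the common limit $\theta_\infty$ to satisfy $v\geq u$ (resp.\ $v\leq l$), pinning $\theta_\infty$ down as $\tau_2\wedge\rho^+$ (resp.\ $\rho_2\wedge\tau^-$). The one stylistic difference is that you fix the alternation $j_{n+1}=3-j_n$ in advance and then argue it must coincide with ``pick the current minimizer,'' while the paper defines the next index case-by-case according to which of $v^1,v^2$ attains the extremum; these produce the same construction, as your argument that a non-terminal switch forces $v^{j_n}\geq u>v=v^{3-j_n}$ shows. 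One minor imprecision: you invoke ``continuity of $u$'' in the limiting step, but only lower semicontinuity of $u$ is assumed — and LSC is exactly what is needed to pass $v^{j}(\theta_{n+1})\geq u(\theta_{n+1})$ to the limit, so the argument stands as written once that is corrected.
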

\begin{proof} We prove item (ii). It is easy to see that  $ v:=v^1\vee v^2$ is continuous as well as
$v\leq u$ and $v(T,\cdot)\leq g$. The only remaining thing to prove is
 for each $(s,x)\in[0,T]\times \mathbb{R}^d$, and any stopping times
$$\rho _1\leq \rho_2\in \mathcal{T}^{s,x},$$ 
\begin{equation}\label{v12}v(\rho _1, X^{s,x}_{\rho _1
})\leq 
\mathbb{E}^{s,x}\left [ v(\rho _2\wedge \tau  ^-, X^{s,x}_{\rho _2\wedge \tau  ^-})|\mathcal{F}^{s,x}_{\rho _1}
\right]\ -\mathbb{P}^{s,x}\ a.s.
\end{equation}
where the stopping time $\tau ^-$ is defined by $\tau ^-=\tau ^- (v; s,x, \rho ^1)$. In other words, we want to prove the sub-martingale property of $v$ along the process $X^{s,x}$ in between the stopping times
$\rho_1\leq \rho _2\wedge \tau ^-$.
The idea of the proof relies on a sequence of stopping times  $(\gamma _n)_{n\geq 0}$ defined recursively as follows: set  $\gamma _0=\rho _1$
and then, for each $n=0,1,2,\dots$
\begin{enumerate}
\item if  
$v(\gamma _n, X_{\gamma _n})\leq l(\gamma _n, X_{\gamma _n}),$
then $\gamma _{n+1}:=\gamma _n$ 
\item if  
$v(\gamma _n, X_{\gamma _n})=v^1(\gamma _n, X_{\gamma _n})>
 l(\gamma _n, X_{\gamma _n}),$
then $$\gamma _{n+1}:=\inf\{t \in [\gamma _n,T]:\{v^1 (t, X_t)\leq l(t, X_t)\}.$$ 
In this case, we note that 
$v^1(\gamma _{n+1}, X_{\gamma _{n+1}})\leq v(\gamma _{n+1}, X_{\gamma _{n+1}})$ and  \\
$v^1(\gamma _{n+1}, X_{\gamma _{n+1}})\leq l(\gamma _{n+1}, X_{\gamma _{n+1}})$.
\item if  
$v(\gamma _n, X_{\gamma _n})=v^2(\gamma _n, X_{\gamma _n})>
 l(\gamma _n, X_{\gamma _n}),$ then 
 $$\gamma _{n+1}:=\inf\{t \in [\gamma _n, T]: v^2 (t, X_t)\leq l(t, X_t)\}.$$  
In this case, we note that 
$v^2(\gamma _{n+1}, X_{\gamma _{n+1}})\leq v(\gamma _{n+1}, X_{\gamma _{n+1}})$
and \\
$v^2(\gamma _{n+1}, X_{\gamma _{n+1}})\leq l(\gamma _{n+1}, X_{\gamma _{n+1}})$.
\end{enumerate}
One can use  the  definition of stochastic sub-solution for $v^1$ in between the times $\gamma _n \leq \gamma _{n+1}$, together with the observation following item (ii),  or   the definition of stochastic sub-solution for $v^2$ in between $\gamma _n\leq \gamma _{n+1}$  and the observation following item (iii), to conclude that,
 for any $n\geq 0$, $v(t,X_t)$ satisfies the sub-martingale property 
  in between
 $\gamma _n\leq \gamma _{n+1}.$ Concatenating, we conclude that, for each $n$, we have
 \begin{equation}\label{submartingale-n}
 v(\rho _1, X^{s,x}_{\rho _1
})\leq 
\mathbb{E}\left [ v(\rho _2\wedge \gamma  _n, X^{s,x}_{\rho _2\wedge \gamma  _n})|\mathcal{F}^{s,x}_{\rho _1}
\right].
\end{equation}
Now, care must be taken in order to pass to the limit as $n\rightarrow \infty$. By construction, it is clear that $\gamma _n\leq \tau ^-$. 
On the event 
$$A:=\{(\exists)\  n_0,\ \ \gamma _n=\tau  ^-, \ n\geq n_0\},$$
it is very easy to pass to the limit, since the sequence is constant eventually. 
However, on the complementary event 
$$B:=\{\gamma _n<\tau  ^-,\ \ (\forall)\ n\},$$
we have to be more careful. 
Depending on parity, for each $\omega \in B$, there exist
$n_0=n_0(\omega)$ such that
$$v^1(\gamma _{n_0+2k}, X^{s,x}_{\gamma _{n_0+2k}})\leq 
l(\gamma _{n_0+2k}, X^{s,x}_{\gamma _{n_0+2k}}) \textrm{~for~} k\geq 0$$
and 
$$v^2(\gamma _{n_0+2k+1}, X^{s,x}_{\gamma _{n_0+2k+1}})\leq 
l(\gamma _{n_0+2k+1}, X^{s,x}_{\gamma _{n_0+2k+1}}) \textrm{~for~} k\geq 0.$$
This comes from the very definition of the sequence $\gamma _n$.
Since both $v^1$ and $v^2$ are LSC (actually continuous) and $l$ is USC, we can pass to the limit in both inequalities above to concluded that 
for 
$\gamma _{\infty}:=\lim _n \gamma _n$ we have
$$v(\gamma _{\infty}, X^{s,x}_{\gamma _{\infty}})\leq l(\gamma _{\infty}, X^{s,x}_{\gamma _{\infty}}) \textrm{~on~}B.$$
To begin with, this shows that $\gamma _{\infty}\geq \tau  ^-$ so $\gamma _{\infty}=\tau  ^-$ on $B$. However, $\gamma _{\infty}=\tau  ^-$  as well  on $A$.
Now, we  let $n\rightarrow \infty$ in \eqref{submartingale-n} using the Bounded Convergence Theorem, to finish the proof.

This proof uses essentially the continuity of $v^1$ and $v^2$.  However,  using more technical arguments, one could prove something similar assuming only the LSC property for stochastic sub-solutions.

\end{proof}

\noindent \emph{Proof of Theorem \ref{Perron}}. 

We will only prove that $v^+$ is a sub-solution of \eqref{pde1}: the other part is symmetric. 

\noindent {\bf Step 1}. \emph{The interior sub-solution property.} Note that we already know that $v^+$ is bounded and upper semi-continuous (USC). Let 
$$\varphi:[0,T]\times \mathbb{R}^d\rightarrow \mathbb{R}$$ be a $C^{1,2}$-test function function and assume that 
$v^+-\varphi$ attains a strict local maximum (an assumption which is not restrictive) equal to zero at some interior point $(t_0, x_0)\in (0,T)\times \mathbb{R}^d$. Assume that $v^+$ does not satisfy the viscosity sub-solution property, i.e.
$$\min \{\varphi (t_0, x_0)-l(t_0, x_0),\max\{-\varphi (t_0, x_0)_t-L_t\varphi (t_0, x_0), \varphi (t_0, x_0)-u(t_0,x_0)\}\}>0.$$
 According to Assumption \eqref{u} (and this is actually the only place where the assumption is used), we have $v^+\leq u$. This means  
$$v^+(t_0,x_0)>l(t_0,x_0),$$ and
$$-\varphi _t(t_0,x_0)-L_t \varphi (t_0, x_0)>0.$$
Since the coefficients of the SDE  are continuous, we conclude that there exists a small enough ball
$B(t_0, x_0, \varepsilon)$ such that
$$-\varphi _t-L_t \varphi >0\textrm{~on~} \overline{ B(t_0, x_0, \varepsilon)},$$
and 
$$\varphi > v^+\textrm{~on~} \overline{ B(t_0, x_0, \varepsilon)}-(t_0,x_0).$$
In addition, since $\varphi(t_0,x_0)=v^+(t_0,x_0)>l(t_0,x_0)$ and $\varphi $ is continuous and $l$ is USC, we conclude that, if $\varepsilon$ is small enough, then
$$\varphi -\varepsilon\geq l \textrm{~on~}  \overline{ B(t_0, x_0, \varepsilon)}.$$
Since $v^+-\varphi $ is upper semi-continuous and $\overline{B(t_0, x_0, \varepsilon)}-B(t_0, x_0, \varepsilon /2)$ is  compact, this means that there exist a $\delta >0$ such that 
$$\varphi -\delta \geq  v^+\textrm{~on~} \overline{B(t_0, x_0, \varepsilon)}-B(t_0, x_0, \varepsilon /2).$$

The next Lemma is the fundamental step in the proof by approximation (and represents a major technical difference compared to the previous paper \cite{bayraktar-sirbu-1}). This is the result that actually allows us to work with stochastic semi-solutions which are \emph{continuous}.
\begin{Lemma}\label{Dini} Let $0<\delta '<\delta.$ Then there exists a stochastic super-solution $v\in \mathcal{V}^+$ such that 
$$\varphi -\delta ' \geq  v \textrm{~on~} \overline{B(t_0, x_0, \varepsilon)}-B(t_0, x_0, \varepsilon /2).$$\end{Lemma}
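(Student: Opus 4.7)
The plan is a Dini-type compactness argument that combines pointwise approximation of the infimum $v^{+}=\inf_{w\in\mathcal{V}^{+}}w$ with the lattice closure from Lemma~\ref{lattice}. The relevant compact set is the closed annulus $K:=\overline{B(t_0,x_0,\varepsilon)}\setminus B(t_0,x_0,\varepsilon/2)$, on which we already know $v^{+}\leq\varphi-\delta$. The reason the conclusion must be obtained by combining \emph{finitely} many elements of $\mathcal{V}^{+}$, rather than by taking $v=v^{+}$ directly, is exactly the obstruction flagged just before Lemma~\ref{lattice}: we cannot assert a priori that $v^{+}$ itself lies in $\mathcal{V}^{+}$.

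First I would fix a tolerance $\eta:=(\delta-\delta')/3>0$. For each fixed $(t,x)\in K$, the definition of $v^{+}$ as an infimum gives some $w_{t,x}\in\mathcal{V}^{+}$ with
\[
w_{t,x}(t,x)<v^{+}(t,x)+\eta\leq \varphi(t,x)-\delta+\eta.
\]
Since members of $\mathcal{V}^{+}$ are continuous by Definition~\ref{def:supersolutions}(i), the difference $w_{t,x}-\varphi$ is continuous at $(t,x)$, so there is an open neighbourhood $U_{t,x}\subset[0,T]\times\mathbb{R}^d$ of $(t,x)$ on which $w_{t,x}-\varphi<-\delta+2\eta<-\delta'$, i.e.\ $w_{t,x}<\varphi-\delta'$ throughout $U_{t,x}$.

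Next, by compactness of $K$, I would extract a finite subcover $U_{t_1,x_1},\dots,U_{t_N,x_N}$ with associated super-solutions $w_1,\dots,w_N\in\mathcal{V}^{+}$. Setting $v:=w_1\wedge\cdots\wedge w_N$ and applying Lemma~\ref{lattice}(i) inductively $N-1$ times (whose hypothesis $u$ LSC is in force under the assumptions of Theorem~\ref{Perron}(i)) gives $v\in\mathcal{V}^{+}$. At any $(s,y)\in K$ there is some index $i$ with $(s,y)\in U_{t_i,x_i}$, whence $v(s,y)\leq w_i(s,y)<\varphi(s,y)-\delta'$, delivering the required bound on $K$.

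I do not anticipate a genuine obstacle: the argument is driven entirely by the definition of $v^{+}$ as an infimum, the continuity built into Definition~\ref{def:supersolutions}, and the closure of $\mathcal{V}^{+}$ under binary minima. The only subtle point is booking the tolerances so that the open strict inequality $w_{t,x}<\varphi-\delta'$ persists on a whole neighbourhood, which is why I insert the slack $\eta=(\delta-\delta')/3$ between the two thresholds.
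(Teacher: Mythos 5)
Your proof is correct. It achieves the same goal as the paper's argument (a Dini-type compactness argument on the closed annulus $K$) but by a somewhat cleaner and more self-contained route. The paper first invokes a result from the Appendix of \cite{bayraktar-sirbu-1} together with Lemma~\ref{lattice} to extract a \emph{decreasing} countable sequence $v_n\searrow v^+$ of stochastic super-solutions, then applies the Dini-type argument to that sequence: it forms the nested closed sets $A_n=\{v_n\geq\varphi-\delta'\}\cap K$, notes $\bigcap_n A_n=\emptyset$, and by compactness finds $n_0$ with $A_{n_0}=\emptyset$, so $v:=v_{n_0}$ works. You bypass the countable-sequence extraction entirely: at each point of $K$ you pick an $\eta$-optimizer in $\mathcal{V}^+$ directly from the definition of $v^+$ as an infimum, open up a neighborhood where the strict inequality persists (this is where continuity of members of $\mathcal{V}^+$ is used), and extract a finite subcover of $K$; Lemma~\ref{lattice}(i) then closes $\mathcal{V}^+$ under the resulting finite minimum. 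The upshot is that you need Lemma~\ref{lattice} only finitely many times and you avoid the external Appendix result, at the cost of re-deriving the compactness step rather than citing Dini's theorem for a monotone sequence. One cosmetic remark: the slack $\eta=(\delta-\delta')/3$ together with the intermediate bound $-\delta+2\eta$ is more elaborate than needed; it suffices to take any $0<\eta<\delta-\delta'$, note $w_{t,x}(t,x)-\varphi(t,x)<-\delta+\eta<-\delta'$, and use continuity of $w_{t,x}-\varphi$ at $(t,x)$ to get a neighborhood on which $w_{t,x}-\varphi<-\delta'$ directly.
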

\noindent \emph{Proof of Lemma ~\ref{Dini}}. 
Using Lemma \ref{lattice} together with the result in the Appendix of \cite{bayraktar-sirbu-1}, 
we can choose a \emph{decreasing} sequence $(v_n)_{n\geq 0}\subset \mathcal{V}^+$ of stochastic super-solutions such that
$$v_n\searrow v^+.$$
Now the proof follows the idea of Dini's Theorem. More precisely, we denote by 
$$A_n:=\{v_n\geq \varphi -\delta '\}\cap \Big (\overline{B(t_0, x_0, \varepsilon)}-B(t_0, x_0, \varepsilon /2)\Big).$$
We have that $A_{n+1}\subset A_n$ and 
$\cap _{n=0}^{\infty}A_n=\emptyset.$
In addition, since $v_n$ is USC (being continuous) and $\varphi$ is continuous as well, each $A_n$ is closed. 
By compactness, we get that there exits an $n_0$ such that 
$A_{n_0}=\emptyset ,$
which means that
$$\varphi -\delta '>v_{n_0}, \textrm{~on~} \overline{ B(t_0, x_0, \varepsilon)}-(t_0,x_0).$$
We now choose
$v:=v_{n_0}.$
\hfill $\square$ 

We finish the proof of the main theorem as follows.
Let $v\in \mathcal{V}^+$ be given by Lemma \ref{Dini}.
 Choosing $0<\eta < \delta' \wedge \varepsilon$ small enough we have that the function 
$$\varphi _{\eta}:=\varphi-\eta$$ satisfies the properties
$$-\varphi ^{\eta}_t-L_t \varphi  ^{\eta} >0\textrm{~on~} \overline{ B(t_0, x_0, \varepsilon)},$$
$$\varphi ^{\eta} >v \textrm{~on~} \overline{B(t_0, x_0, \varepsilon)}-B(t_0, x_0, \varepsilon /2),$$
$$\varphi ^{\eta} \geq l \textrm{~on~}  \overline{ B(t_0, x_0, \varepsilon)},$$
and 
$$\varphi ^{\eta}(t_0,x_0)=v^+(t_0,x_0)-\eta.$$

Now, we define the new function 
$$v^{\eta}=
\left \{
\begin{array}{l}
 v\wedge  \varphi ^{\eta} \textrm{~on~} \overline{ B(t_0, x_0, \varepsilon)},\\
v \textrm{~outside~}\overline{ B(t_0, x_0, \varepsilon)}.
\end{array}
\right.
$$
We clearly have $v^{\eta}$ is continuous and $v^{\eta}(t_0,x_0)=\varphi ^{\eta}(t_0,x_0)<v^+(t_0, x_0).$ Also, $v^{\eta}$ satisfies the terminal condition (since $\varepsilon$ can be chosen so that $T>t_0+\varepsilon$ and $v$ satisfies the terminal condition).  It only remains to show that 
$v^{\eta}\in \mathcal{V}^+$ to obtain a contradiction.

 We need to show that the process $(v^{\eta} (t,X^{s,x}_t))_{s\leq t\leq T}$ is a super-martingale on $(\Omega ^{s,x}, \mathbb{P}^{s,x})$ with respect to the filtration 
$( \mathcal{F}^{s,x}_t) _{s\leq t\leq T}$ in the \emph{upper continuation region} $C^+:=\{v^{\eta}<u\}$, i.e. satisfies item (ii) in Definition \ref{def:supersolutions}.

The sketch of how to prove this can be given in three steps:

1. the process $(v^{\eta} (t,X^{s,x}_t))_{s\leq t\leq T}$  is a super-martingale locally in the region \Big ($[s,T]\times \mathbb{R}^d-B(t_0, x_0, \varepsilon/2)\Big) \cap C^+$ because it coincides there with the process  $(v (t,X^{s,x}_t))_{s\leq t\leq T}$ which is a super-martingale in $C^+$ (in the sense of the Definition \ref{def:supersolutions}).

2.  In addition, in the region $\overline{B(t_0, x_0, \varepsilon)}$ the process $(v^{\eta} (t,X^{s,x}_t))_{s\leq t\leq T}$ is the minimum between a local super-martingale ($\varphi ^{\eta}$) and a local super-martingale as long as $v<u$.
One needs to perform an \emph{identical argument} to the sequence of stopping times in the proof of Lemma \ref{lattice} to get that we have a  super-martingale in  $\overline{B(t_0, x_0, \varepsilon)}$.

3. The two items above can be easily concatenated as in the proof of Theorem 2.1 in \cite{bayraktar-sirbu-1}. More precisely, we exploit the fact that the two regions in items 1 and 2 overlap over  the ``strip''
$\overline{B(t_0, x_0, \varepsilon)}-B(t_0, x_0, \varepsilon /2)$, so the concatenating sequence is the sequence of ``up and down-crossings'' of this ``strip''.
This concludes the proof of the interior sub-solution property.

{\bf Step 2.} \emph{The terminal condition}. Assume that, for some $x_0\in \mathbb{R}^d$ we have 
$v^+(T,x_0)>g(x_0)\geq l(T,x_0).$
We want to use this information in a similar way to Step 1 to construct a contradiction. Since $g$ and $l$ are USC , there exists an $\varepsilon >0$ such that
$$\max\{l(t,x), g(x)\}\leq v^+(T,x_0)-\varepsilon \ \textrm{~if~} \ \max\{|x-x_0|,\  T-t \}\leq \varepsilon.$$
We now use the fact that $v^+$ is USC to conclude it is bounded above on the compact set
$$\overline{B(T,x_0,\varepsilon)}-B(T,x_0,\varepsilon /2))\cap( [0,T]\times \mathbb{R}^d).$$
Choose $\eta>0 $ small enough so that
\begin{equation}
\label{bound1}
v^+(T,x_0)+\frac{\varepsilon^2}{4\eta} > \varepsilon +\sup_{(t,x)\in (\overline{B(T,x_0,\varepsilon)}-B(T,x_0,\varepsilon /2))\cap( [0,T]\times \mathbb{R}^d)} v^+(t,x).
\end{equation}
Using a (variant of) Lemma \ref{Dini} we can find a $v\in \mathcal{V}^+$ such that

\begin{equation}
\label{bound}
v^+(T,x_0)+\frac{\varepsilon^2}{4\eta} > \varepsilon +\sup_{(t,x)\in (\overline{B(T,x_0,\varepsilon)}-B(T,x_0,\varepsilon /2))\cap( [0,T]\times \mathbb{R}^d)} v(t,x).
\end{equation}

We now define, for $k>0$ the following function
$$\varphi^{\eta,\varepsilon, k}(t,x)=v^+(T,x_0)+\frac{|x-x_0|^2}{\eta}+ k(T-t). $$
For $k$ large enough (but no smaller than 
 $\varepsilon/2\eta$),  we have that 
$$-\varphi^{\varepsilon, \eta,k}_t-\mathcal{L}_t \varphi ^{\varepsilon, \eta, k}>0 ,\ \ \ \textrm{on~} \overline{B(T,x_0,\varepsilon)}.$$
We would like to emphasize that, for convenience, we work here with the norm
$$\|(t,x)\|=\max \{|t|, \|x\|\}, (t,x)\in \mathbb{R}\times \mathbb{R}^d,$$
where $\|x\|$ is the Euclidean norm on $\mathbb{R}^d$. With this particular norm, we can 
use \eqref{bound} to obtain
$$\varphi^{\varepsilon, \eta,k}\geq \varepsilon +v \textrm{~on~} (\overline{B(T,x_0,\varepsilon)}-B(T,x_0,\varepsilon /2))\cap( [0,T]\times \mathbb{R}^d).$$
Also, $$\varphi^{\varepsilon, \eta,k}(T,x)\geq v^+(T,x_0)\geq g(x)+\varepsilon \textrm{~for~} |x-x_0|\leq \varepsilon$$ and
 $$\varphi^{\varepsilon, \eta,k}(t,x)\geq v^+(T,x_0)\geq l(t,x)+\varepsilon\; \textrm{~for~} \max\{|x-x_0|, T-t\}\leq \varepsilon. $$
 Now, we can choose $\delta <\varepsilon$ and define 
$$v^{\varepsilon, \eta,k, \delta }=
\left \{
\begin{array}{l}
 v\wedge \Big ( \varphi ^{\varepsilon, \eta,k}-\delta  \Big)\textrm{~on~} \overline{ B(T, x_0, \varepsilon)},\\
v \textrm{~outside~}\overline{ B(T, x_0, \varepsilon)}.
\end{array}
\right.
$$
Using again the ideas in 
 items 1-3 in Step 1 of the proof, we can show that $v^{\varepsilon, \eta, k, \delta}\in \mathcal{V}^+$ but $v^{\varepsilon, \eta, k, \delta}(T,x_0)=v^+(T,x_0)-\delta <v^+(T,x_0)$, leading to a contradiction.

\hfill $\square$. 

\subsection{Verification by comparison}
\begin{Definition}\label{comparison} We say that the viscosity comparison principle holds for the equation \eqref{pde} if,
whenever we have  a bounded, upper semi-continuous (USC) sub-solution $v$ of  \eqref{pde1}
and a bounded lower semi-continuous (LSC) super-solution $w$ of \eqref{pde2}
 then 
$v\leq w.$
\end{Definition}
\begin{Theorem}\label{theorem}
Let $l,u,g$ be bounded such that $l$ is USC, $u$ is LSC and $g$ is continuous.
Assume that conditions \eqref{u} and \eqref{l} hold. Assume also that the comparison principle  is satisfied. Then  there exists a unique bounded and continuous viscosity solution $v$ to \eqref{pde} which equals both the lower and the upper  values of the game which means
$$v_*=v=v^*.$$
In addition,
 for each $(s,x)\in[0,T]\times \mathbb{R}^d$, 
 the stopping times
 $$ \rho ^*(s,x)=\rho ^+(v,s,x,s) \textrm{~and~}
\tau ^*(s,x) =\tau ^-(v,s,x,s)
$$
 are optimal for the two players.
 
 \end{Theorem}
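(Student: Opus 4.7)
Under the theorem's hypotheses both parts of Theorem~\ref{Perron} apply, so $v^+$ is a bounded USC viscosity sub-solution of \eqref{pde1} and $v^-$ is a bounded LSC viscosity super-solution of \eqref{pde2}. The comparison principle then gives $v^+\leq v^-$, which combined with the reverse chain $v^-\leq v_*\leq v^*\leq v^+$ (from Lemma~\ref{lem:verification} together with $v_*\leq v^*$) forces all four functions to coincide. The common value $v$ is both USC and LSC, hence continuous, and it is simultaneously a sub- and super-solution of \eqref{pde}, so a viscosity solution. Uniqueness among bounded continuous viscosity solutions follows immediately from comparison, since any other such $w$ satisfies both $w\leq v$ and $v\leq w$.

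The nontrivial remaining claim is the optimality of $\tau^*:=\tau^-(v;s,x,s)$ and $\rho^*:=\rho^+(v;s,x,s)$. Since a priori $v$ is only the pointwise supremum/infimum of $\mathcal{V}^-,\mathcal{V}^+$ and not necessarily a member of either class, Lemma~\ref{lem:verification} cannot be applied with $v$ directly. The plan is to invoke Lemma~\ref{lattice}(ii) together with the countable-selection argument in the appendix of \cite{bayraktar-sirbu-1} (the symmetric analogue of the construction in the proof of Lemma~\ref{Dini}) to obtain an increasing sequence $v_n\in\mathcal{V}^-$ with $v_n\nearrow v$, and set $\tau_n:=\tau^-(v_n;s,x,s)$. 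Repeating the computation in the proof of Lemma~\ref{lem:verification} for each $v_n$ --- using $v_n\leq u$ globally, $v_n(\tau_n,X_{\tau_n})\leq l(\tau_n,X_{\tau_n})$ on $\{\tau_n\leq\rho\}$ by closedness of the sub-level set, and $v_n(T,\cdot)\leq g$ --- yields
$$v_n(s,x)\;\leq\;J(s,x,\tau_n,\rho)\qquad\text{for every }\rho\in\mathcal{T}^{s,x}.$$

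The crucial and most delicate step, and the main obstacle of the proof, is identifying $\lim_n\tau_n=\tau^*$ and controlling $J(s,x,\tau_n,\rho)$ in the limit. Since $v_n\leq v$, the sub-level sets satisfy $\{v_n\leq l\}\supseteq\{v\leq l\}$, so the monotone convergence moves the hitting times in the \emph{wrong} direction: $\tau_n\leq\tau^*$, and the monotone limit $\tau_\infty:=\lim_n\tau_n$ lies only below $\tau^*$. To recover the reverse inequality one uses Dini's theorem, applied to the pathwise-compact set $[s,T]\times\overline{X^{s,x}([s,T])}$, to obtain uniform convergence $v_n\to v$ along the path; combined with the continuity of $v$ and the USC of $l$ this lets one pass to the limit in $v_n(\tau_n,X_{\tau_n})\leq l(\tau_n,X_{\tau_n})$ and conclude $v(\tau_\infty,X_{\tau_\infty})\leq l(\tau_\infty,X_{\tau_\infty})$, forcing $\tau_\infty\geq\tau^*$, hence $\tau_\infty=\tau^*$. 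A short case analysis on $\{\tau^*<\rho\}$, $\{\rho<\tau^*\}$, $\{\rho=\tau^*<T\}$, $\{\rho=\tau^*=T\}$ --- invoking $l\leq u$ in the third case and $l(T,\cdot)\leq g$ in the fourth --- shows that the integrand of $J(s,x,\tau_n,\rho)$ has $\limsup$ bounded above by that of $J(s,x,\tau^*,\rho)$, so reverse Fatou (integrands uniformly bounded) gives $\limsup_n J(s,x,\tau_n,\rho)\leq J(s,x,\tau^*,\rho)$. Together with $v_n(s,x)\nearrow v(s,x)$ this produces $v(s,x)\leq J(s,x,\tau^*,\rho)$ for every $\rho$; since $v=v_*$, the stopping time $\tau^*$ achieves the sup-inf. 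The symmetric argument with a decreasing sequence $w_n\searrow v$ in $\mathcal{V}^+$ yields the optimality of $\rho^*$.
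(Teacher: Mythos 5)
Your proof is correct and follows the paper's own argument essentially step for step: take an increasing sequence $v_n\nearrow v$ in $\mathcal{V}^-$, derive $v_n(s,x)\leq J(s,x,\tau_n,\rho)$ from the reasoning in Lemma~\ref{lem:verification}, identify $\lim_n\tau_n=\tau^*$ via Dini's theorem and the USC of $l$, and pass to the limit in $J$ using boundedness and the inequalities $l\leq u$, $l(T,\cdot)\leq g$. The only differences are cosmetic — you establish $\tau_\infty=\tau^*$ before passing to the limit in $J$ (the paper does it afterward), and you apply Dini on the compact closure of each sample path rather than on $\{|x|\leq N\}$ followed by $N\to\infty$.
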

 \proof{It is clear that the unique and continuous viscosity solution of \eqref{pde} is 
 $$v^-=v_*=v^*=v^+.$$ 
 The only thing to be proved is that 
$$\rho ^*(s,x)=\rho ^+(v,s,x,s),\ \ \ \tau ^*(s,x) =\tau ^-(v,s,x,s)$$
are  optimal strategies.
Let $v_n$ an increasing sequence such that $v_n\in \mathcal{V}^-$ and
$v^-=\sup _n v_n$. According to the proof of Lemma \ref{lem:verification}, if we define
$$\tau ^-_n:=\tau ^-(v_n, s,x,s),$$
then, for each $\rho \in \mathcal{T}^{s,x}$ we have
$$v_n(s,x)\leq \mathbb{E}\left [v_n(\tau ^-_n\wedge \rho, X^{s,x}_{\tau ^-_n\wedge \rho})\right]\leq J(s,x, \tau_n ^-,\rho).$$
Using the definition of $J$, together with the fact that the lower obstacle $l$ is USC, $l\leq u$ and $l(T,\cdot)\leq g\leq u(T,\cdot)$, we can pass to the limit to conclude
$$v^- (s,x)\leq  J(s,x, \tau ^-_{\infty},\rho), \ \textrm{~for~all~}\rho  \in \mathcal{T}^{s,x},$$
where $$\tau ^-_{\infty}:=\lim _n \tau ^- _n \leq \tau ^*.$$
The inequality $\tau ^-_n \leq \tau ^*$ is obvious taking into account the definition of the two stopping times and the fact that $v_n\leq v$. Now, since $v_n$ are LSC (actually continuous) and converge increasingly to the \emph{continuous} function $v=v^-$, then we can use  Dini's Theorem to conclude that the convergence is uniform on compacts $|x|\leq N$.
Now, we have 
$$v_n(\tau ^-_n, X^{s,x}_{\tau^- _n})\leq l(\tau ^-_n, X^{s,x}_{\tau^- _n}) \textrm{~on~} \{\tau ^-_n<T\} \supset   \{\tau^- _{\infty}<T\} $$
Since $v_n\nearrow v$ uniformly on compacts, and $l$ is USC we can pass to the limit, first in $n$ to conclude that
$$v(\tau ^-_{\infty}, X^{s,x}_{\tau^- _{\infty}})\leq l(\tau ^-_{\infty}, X^{s,x}_{\tau^- _{\infty}}) \textrm{~on~}   \{\tau^- _{\infty}<T\} \cap\{|X^{s,x}_t|\leq N\ (\forall)\ t\},$$
and then let $N\rightarrow \infty$ to obtain
$$v(\tau ^-_{\infty}, X^{s,x}_{\tau^- _{\infty}})\leq l(\tau ^-_{\infty}, X^{s,x}_{\tau^- _{\infty}}) \textrm{~on~}   \{\tau^- _{\infty}<T\}.$$
This shows that, $\tau ^*\leq \tau ^-_{\infty}$, so $\tau ^*= \tau ^-_{\infty}$. 
To sum up, we have that
$$v(s,x)\leq  J(s,x, \tau ^*,\rho), \ \textrm{~for~all~}\rho  \in \mathcal{T}^{s,x}.$$
Similarly, we can prove
$$v(s,x)\geq  J(s,x, \tau ,\rho^*), \ \textrm{~for~all~}\tau  \in \mathcal{T}^{s,x},$$
and the proof is complete, showing that $\rho^*$ and $\tau ^*$ are optimal strategies for the two players.} 
\endproof
\begin{Remark} {\rm The regularity needed in our main results, Theorems \ref{Perron} and \ref{theorem}, is the minimal regularity needed to ensure that the stopping regions are closed and the continuation regions are open.
However, unless one has a direct way to check \eqref{u} and \eqref{l}, the opposite semi-continuity of the obstacles has to be assumed, according to Remark \ref{rem:nonempty} . Theorem \ref{theorem} can be therefore applied in general when the obstacles are continuous.}
\end{Remark}
 \section{Optimal stopping and (single) obstacle problems }
 The case of optimal stopping can be easily treated as above, assuming, \emph{formally} that the upper obstacle is infinite
 $u=+\infty.$
 However, in order to have meaningful results, Definitions \ref{def:supersolutions} and \ref{def:subsolutions} have to be modified accordingly, together with the main Theorems \ref{Perron} and \ref{theorem}, noting that
$\rho^-=T,$
i.e. the minimizer player never stops the game before maturity.

Having this in mind, a stochastic super-solution, is, therefore a \emph{continuous excessive} function in the lingo of optimal stopping for Markov processes. We remind the reader that we did \emph{not} assume the selection $X^{s,x}$ to be Markov, though.

One could show easily that $v^+\in \mathcal{V}^+$ if the continuity in the definition of $\mathcal{V}^+$ were relaxed to USC. Under this USC (only) framework, 
the corresponding  Theorem \ref{Perron}, part (i) can be translated as: \emph{the least  excessive function is a viscosity sub-solution of the obstacle problem}. 
While it is known, in the Markov  case that the  value function is the least excessive function (see  \cite{shiryaev}), we \emph{never} analyze the value function \emph{directly} in our approach.
 We can not show directly that $v^-\in \mathcal{V}^-$. Viscosity comparison is necessary to close the argument.

 All the proofs of the modified Theorems \ref{Perron} and \ref{theorem} work identically, up to obvious modifications.
\bibliographystyle{amsplain}

\bibliography{references} 

\end{document}